\documentclass[12pt,reqno,a4wide]{amsart}

\usepackage{tikz} 
\usepackage{calrsfs}
\usepackage{mathrsfs}

\usepackage{array}
\usepackage{hyperref}

\usetikzlibrary{decorations.pathreplacing}
\usetikzlibrary{fit}

\usepackage{pgfplots}

\allowdisplaybreaks

\newtheorem{theorem}{Theorem}

\newtheorem{prop}{Proposition}

\catcode`,\active

\catcode`\,12

\begin{document}
	\title
{Fibonacci numbers along residue classes and convolutions}

\author[H. Prodinger ]{Helmut Prodinger }
\address{Department of Mathematics, University of Stellenbosch 7602, Stellenbosch, South Africa
	and
	NITheCS (National Institute for
	Theoretical and Computational Sciences), South Africa.}
\email{warrenham33@gmail.com}

\subjclass[2010]{11B39}

\begin{abstract} The sequence $F_{dn+h}$ and its convolutions have (for $h=0$) been studied in a recent paper at the arxiv.
	The instance with general $h$ is more involved and uses Chebyshev polynomials. 
\end{abstract}

\maketitle

\section{A review of the Chebyshev polynomials of the second kind}

These polynomials might be defined and studied in many ways \cite{AAR}, but we consider $U_n(x)$ via the generating function:
\begin{equation*}
\frac{1}{1-2tx+t^2}=\sum_{n\ge0}U_n(x)t^n.
\end{equation*}
The denominator $1-2tx+t^2$ leads to the recursion $U_n(x)-2xU_{n-1}(x)+U_{n-2}(x)=0$, with $U_0(x)=1$ and $U_1(x)=2x$.
From this one can obtain via induction the explicit formula
\begin{equation*}
\begin{cases}
	U_{2n}(x)=\sum_kx^{2k}(-1)^{n-k}\binom{n+k}{2k}2^{2k},\\
		U_{2n+1}(x)=\sum_kx^{2k+1}(-1)^{n-k}\binom{n+k+1}{2k+1}2^{2k+1}.
	\end{cases}
\end{equation*}
Since
\begin{equation*}
\frac{\partial}{\partial x}\frac{1}{1-2tx+t^2}=\frac{2t}{(1-2tx+t^2)^2} \quad\text{and}\quad\frac{\partial}{\partial x}\frac{1}{(1-2tx+t^2)^{s}}=\frac{2st}{(1-2tx+t^2)^{s+1}},
\end{equation*}
we may set
\begin{equation*}
\frac{1}{(1-2tx+t^2)^{s+1}}=\sum_{n\ge0}U_n^{(s)}(x)t^n
\end{equation*}
and find again explicit formul\ae\ 
\begin{equation*}
\begin{cases}
	U_{2n}^{(s)}(x)=\sum_kx^{2k}(-1)^{n-k}\binom{n+k+s}{2k+s}\binom{2k+s}{s}2^{2k},\\
	U_{2n+1}^{(s)}(x)=\sum_kx^{2k+1}(-1)^{n-k}\binom{n+k+s+1}{2k+s+1}\binom{2k+s+1}{s}2^{2k+1}.
\end{cases}
\end{equation*}
This may be derived via induction on the parameter $s$.

A small variation is via the introduction of a $\pm$ sign; let $\epsilon\in\{-1,+1\}$ and
\begin{equation*}
	\frac{1}{(1-2tx-\epsilon t^2)^{s+1}}=\sum_{n\ge0}\overline{U}_n^{(s)}(x)t^n
\end{equation*}
then
\begin{equation*}
	\begin{cases}
		\overline{U}_{2n}^{(s)}(x)=\sum_kx^{2k}\epsilon^{n-k}\binom{n+k+s}{2k+s}\binom{2k+s}{s}2^{2k},\\
		\overline{U}_{2n+1}^{(s)}(x)=\sum_kx^{2k+1}\epsilon^{n-k}\binom{n+k+s+1}{2k+s+1}\binom{2k+s+1}{s}2^{2k+1}.
	\end{cases}
\end{equation*}

\section{The $d$-section of Fibonacci numbers}
The recent paper \cite{ukrain} considers for Fibonacci numbers $F_{n+2}=F_{n+1}+F_n$, $F_0=0$, $F_1=1$
\begin{equation*}
\sum_{n\ge0}F_{2n}z^n=\frac{z}{1-3z+z^2} \quad\text{and, more generally,}\quad \sum_{n\ge0}F_{dn}z^n.
\end{equation*}
Here, we are interested in
\begin{equation*}
	\sum_{n\ge0}F_{dn+h}z^n
\end{equation*}
which makes the analysis more complete and also more challenging.

\begin{prop}

\begin{equation*}
\sum_{n\ge0}F_{nd+h}z^n=\frac{F_h+(-1)^hzF_{d-h}}{1-zL_{d}+(-1)^dz^2}.
\end{equation*}
\end{prop}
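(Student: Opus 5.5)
The plan is to use the Binet formula. Put $\alpha=\frac{1+\sqrt5}{2}$ and $\beta=\frac{1-\sqrt5}{2}$. Then $F_m=(\alpha^m-\beta^m)/\sqrt5$, $L_m=\alpha^m+\beta^m$ and $\alpha\beta=-1$. Summing two geometric series gives
\begin{equation*}
\sum_{n\ge0}F_{nd+h}z^n=\frac{1}{\sqrt5}\Bigl(\frac{\alpha^h}{1-\alpha^dz}-\frac{\beta^h}{1-\beta^dz}\Bigr).
\end{equation*}
The common denominator is $(1-\alpha^dz)(1-\beta^dz)=1-(\alpha^d+\beta^d)z+(\alpha\beta)^dz^2=1-zL_d+(-1)^dz^2$. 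This is exactly the denominator in the Proposition.

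The numerator is
\begin{equation*}
\frac{1}{\sqrt5}\bigl(\alpha^h-\beta^h\bigr)-\frac{z}{\sqrt5}\bigl(\alpha^h\beta^d-\beta^h\alpha^d\bigr).
\end{equation*}
The constant term is $F_h$. For the coefficient of $z$, factor out $(\alpha\beta)^h=(-1)^h$, assuming $0\le h\le d$ so that the exponents are nonnegative:
\begin{equation*}
\alpha^h\beta^d-\beta^h\alpha^d=(\alpha\beta)^h\bigl(\beta^{d-h}-\alpha^{d-h}\bigr)=-(-1)^h\sqrt5\,F_{d-h}.
\end{equation*}
The coefficient of $z$ is therefore $+(-1)^hF_{d-h}$, which matches the claim.

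The only point needing care is the range of $h$. For general $h$, I would use $F_{-m}=(-1)^{m+1}F_m$, which follows from the same Binet formula with $\alpha\beta=-1$. The identity $\alpha^h\beta^d-\beta^h\alpha^d=(\alpha\beta)^h(\beta^{d-h}-\alpha^{d-h})$ holds for all integers, so the formula holds verbatim with $F_{d-h}$ read via negative indices.

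As an alternative check avoiding irrationals, I would verify that the sequence $a_n=F_{nd+h}$ satisfies $a_n=L_da_{n-1}-(-1)^da_{n-2}$. This follows from the classical identity $F_{m+d}+(-1)^dF_{m-d}=L_dF_m$. Multiplying the generating function by the denominator leaves only the terms of degree $0$ and $1$: $a_0=F_h$ and $a_1-L_da_0=F_{d+h}-L_dF_h=-(-1)^dF_{h-d}=(-1)^hF_{d-h}$. This again uses the negative-index rule.
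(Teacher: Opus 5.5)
Your proposal is correct and follows the paper's proof: both use Binet's formula, sum the two geometric series over the common denominator $(1-\alpha^dz)(1-\beta^dz)=1-zL_d+(-1)^dz^2$, and factor $(\alpha\beta)^h=(-1)^h$ out of the $z$-coefficient to obtain $(-1)^hF_{d-h}$. Your remark on negative indices and your recurrence-based check via $F_{m+d}+(-1)^dF_{m-d}=L_dF_m$ are both valid additions that the paper does not include.
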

\textbf{Proof.} We will use the Binet formul\ae 
\begin{equation*}
F_n=\frac{\alpha^n-\beta^n}{\alpha-\beta},\ L_n=\alpha^n+\beta^n
\quad\text{with}\quad \alpha=\frac{1+\sqrt5}{2},\ \beta=\frac{1-\sqrt5}{2}.
\end{equation*}
Then
\begin{align*}
X&:=(1-zL_{d}+(-1)^dz^2)	\sum_{n\ge0}F_{nd+h}z^n\\
&=(1-z(\alpha^{d}+\beta^{d})+(-1)^dz^2)	\sum_{n\ge0}\frac{\alpha^{nd+h}-\beta^{nd+h}}{\alpha-\beta}z^n\\
&=\frac{1-z(\alpha^{d}+\beta^{d})+(-1)^dz^2}{\alpha-\beta}	\sum_{n\ge0}(\alpha^{nd+h}-\beta^{nd+h})z^n\\
&=\frac{1-z(\alpha^{d}+\beta^{d})+(-1)^dz^2}{\alpha-\beta}\bigg[\frac{\alpha^{h}}{1-z\alpha^{d}}- \frac{\beta^{h}}{1-z\beta^{d}}\bigg]\\
&=\frac{1-z(\alpha^{d}+\beta^{d})+(-1)^dz^2}{\alpha-\beta}\frac{\alpha^{h}(1-z\beta^{d})- \beta^{h}(1-z\alpha^{d})}{1-z(\alpha^{d}+\beta^d)+(-1)^dz^2}\\
&=\frac{\alpha^{h}(1-z\beta^{d})- \beta^{h}(1-z\alpha^{d})}{\alpha-\beta}\\
&=F_h-z\frac{\alpha^{h}(\beta^{d})- \beta^{h}(\alpha^{d})}{\alpha-\beta}\\
&=F_h-z(-1)^{h}\frac{\beta^{d-h}- \alpha^{d-h}}{\alpha-\beta}\\
&=F_h+z(-1)^{h}F_{d-h}.\qed
\end{align*}

The $s$-fold convolution of the sequence $\langle F_{nd+h}\rangle_n$ is (in \cite{ukrain}for $h=0$ only) defined via the generating function
\begin{equation*}
\biggl(\sum_{n\ge0}F_{nd+h}z^n\biggr)^{s+1}.
\end{equation*}
Indeed, for $h=0$, we have
\begin{equation*}
\frac{F_h+(-1)^hzF_{d-h}}{1-zL_{d}+(-1)^dz^2}\bigg|_{h=0}=\frac{zF_{d}}{1-zL_{d}+(-1)^dz^2},
\end{equation*}
and the simpler numerator makes life a bit easier. We have to deal with 
\begin{equation*}
(F_h+(-1)^hzF_{d-h})^{s+1}=\sum_{j=0}^{s+1}\binom{s+1}{j}F_h^{s+1-j}(-1)^{jh}z^jF_{d-h}^{j}
\end{equation*}
and eventually with
\begin{equation*}
\sum_{j=0}^{s+1}\binom{s+1}{j}F_h^{s+1-j}(-1)^{jh}z^jF_{d-h}^{j}\frac{1}{(1-zL_{d}+(-1)^dz^2)^{s+1}}
\end{equation*}
Fortunately, following the introduction, the expansion of
\begin{equation*}
\frac{1}{(1-zL_{d}+(-1)^dz^2)^{s+1}}=\sum_{n\ge0}\overline{U}_n^{(s)}(L_d)z^n
\end{equation*}
is known, and $\epsilon=(-1)^{d-1}$.

Therefore we have the fully explicit $s$-fold convolution of the sequence $\langle F_{nd+h}\rangle_n$ via its generating function:
\begin{theorem}
	\begin{equation*}
		\biggl(\frac{F_h+(-1)^hzF_{d-h}}{1-zL_{d}+(-1)^dz^2}\biggr)^{s+1}=
		\sum_{j=0}^{s+1}\binom{s+1}{j}F_h^{s+1-j}(-1)^{jh}z^jF_{d-h}^{j}
		\sum_{n\ge0}\overline{U}_n^{(s)}(L_d)z^n.
	\end{equation*}
Expanded,
	\begin{equation*}
	\biggl(\frac{F_h+(-1)^hzF_{d-h}}{1-zL_{d}+(-1)^dz^2}\biggr)^{s+1}=\sum_{n\ge0}z^n
	\sum_{j=0}^{s+1}\binom{s+1}{j}F_h^{s+1-j}(-1)^{jh}F_{d-h}^{j}
\overline{U}_{n-j}^{(s)}(L_d).\qed
\end{equation*}

\end{theorem}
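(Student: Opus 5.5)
The plan is to raise both sides of the Proposition to the power $s+1$ and then expand the numerator and the denominator separately. By the Proposition,
\begin{equation*}
\biggl(\sum_{n\ge0}F_{nd+h}z^n\biggr)^{s+1}=\bigl(F_h+(-1)^hzF_{d-h}\bigr)^{s+1}\cdot\frac{1}{(1-zL_d+(-1)^dz^2)^{s+1}}.
\end{equation*}
The numerator is handled by the binomial theorem. It gives $\sum_{j=0}^{s+1}\binom{s+1}{j}F_h^{s+1-j}(-1)^{jh}F_{d-h}^jz^j$, where we used $((-1)^h)^j=(-1)^{jh}$.

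For the denominator, I would match it with the generating function of $\overline{U}_n^{(s)}$ from the first section. That generating function has the form $1/(1-2tx-\epsilon t^2)^{s+1}$. Set $t=z$ and $2x=L_d$, and choose $\epsilon$ so that $-\epsilon=(-1)^d$, that is, $\epsilon=(-1)^{d-1}\in\{\pm1\}$.

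This step needs the most care, because the argument convention matters. The review defines the expansion with $2tx$ in the denominator. So the expansion $\sum_n\overline{U}_n^{(s)}(L_d)z^n$ quoted in the text must be read with the parameter chosen so that $2x=L_d$, i.e.\ $x=L_d/2$, following the paper's stated identification. Equivalently, in the explicit formula the factors $x^{2k}2^{2k}$ become $L_d^{2k}$. I would state this identification explicitly and then cite the explicit formul\ae\ for $\overline{U}^{(s)}_n$. Those formul\ae\ are obtained by the same induction on $s$ as for $U_n^{(s)}$, with $-1$ replaced by $\epsilon$. This gives
\begin{equation*}
\frac{1}{(1-zL_d+(-1)^dz^2)^{s+1}}=\sum_{n\ge0}\overline{U}_n^{(s)}(L_d)z^n .
\end{equation*}

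Multiplying the two expansions gives the first displayed identity of the theorem. For the expanded form, I would take the Cauchy product. The factor $z^j$ shifts the index, so the coefficient of $z^n$ is
\begin{equation*}
\sum_{j=0}^{s+1}\binom{s+1}{j}F_h^{s+1-j}(-1)^{jh}F_{d-h}^{j}\,\overline{U}_{n-j}^{(s)}(L_d),
\end{equation*}
with the convention $\overline{U}_m^{(s)}=0$ for $m<0$. Every step is a formal power series manipulation, so no convergence issues arise.
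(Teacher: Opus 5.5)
Your proposal is correct and follows the paper's argument: binomial expansion of the numerator, the $\overline{U}^{(s)}_n$ generating function with $\epsilon=(-1)^{d-1}$ for the denominator, then the Cauchy product with $\overline{U}^{(s)}_m=0$ for $m<0$. You are also right to flag that the argument must be $x=L_d/2$ (so that $2x=L_d$), since the paper's literal notation $\overline{U}^{(s)}_n(L_d)$ would make the linear coefficient $2L_d$ rather than $L_d$ (e.g.\ $\overline{U}^{(0)}_1(x)=2x$).
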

The special case $h=0$ leads to
\begin{equation*}
\sum_{n\ge0}z^n
	F_{d}^{s+1}	\overline{U}_{n-j}^{(s)}(L_d).
\end{equation*}

\textbf{Remark.} The $d$-section of Lucas numbers can also be considered:
\begin{equation*}
	\sum_{n\ge0}L_{nd+h}z^n=\frac{L_h+(-1)^{h-1}zL_{d-h}}{1-zL_{d}+(-1)^dz^2}.
\end{equation*}
The details are left to the interested reader; the denominator is the same, so only the numerator needs to be changed.

\bibliographystyle{plain}

\end{document}